\documentclass{amsart}
\usepackage[utf8]{inputenc}
\usepackage{amsfonts}
\usepackage{amsmath}
\usepackage{amssymb}
\usepackage{amsthm}
\usepackage{hyperref}
\usepackage[english]{babel}
\usepackage{url}
\usepackage{verbatim,listings}
\usepackage{tikz,tikz-cd}
\usepackage[margin=1.0in]{geometry}
\newcommand{\pmap}[0]
{
\overline{\odot}_r
}

\newtheorem{thm}{Theorem}[section]
\newtheorem{lem}[thm]{Lemma}

\theoremstyle{definition}
\newtheorem{defn}[thm]{Definition}

\theoremstyle{remark}

\newcommand{\divides}{\,\Big{|}\,}

\title{An Exploration of Crank Generating functions for $t$-core partitions}
\author{Samuel Wilson\\ University of Tennessee - Knoxville} 
\date{}

\begin{document}

\maketitle

\begin{abstract}
    In 1919, Ramanujan discovered his famous congruences for the partition function. Not too long after, Freeman Dyson conjectured a combinatorial statistic existed that explained the three congruences, which he dubbed the \textit{crank}. A crank generating function for the partition function was discovered in 1988 by George Andrews and Frank Garvan. Since then other crank generating functions have been found for many other kinds of partitions. In this paper, we give a family of crank generating functions which explain some partition congruences for $t$-core partitions.
\end{abstract}

\section{Introduction}
The study of Ramanujan-like congruences for partition functions has a deep and rich history. In the early 1900s, Srinivasa Ramanujan proved his three famous congruences for the partition function, $\sum_{n\geq0}p(n)q^n = \prod_{n\geq 1} \frac{1}{1-q^n}$,
\begin{align*}
    p(5n+4) &\equiv 0 \pmod{5} \\
    p(7n+5) &\equiv 0 \pmod{7} \\
    p(11n+6) &\equiv 0 \pmod{11} .
\end{align*}
In the late 1940s, Freeman Dyson defined the \textit{rank} of a partition as the largest part of the partition minus the number of parts. This provided a combinatorial explanation for the congruences mod 5 and 7 listed above by dividing up the partitions of $5n+4$ (resp. $7n+5$) into 5 (resp. 7) equally sized sets, using the partition's rank mod 5 (resp. 7). For a more detailed explanation with examples, see \cite{wilson2024cranks}. Shortly after, Dyson conjectured the existence of a partition statistic that explained all three of Ramanujan's congruences which he dubbed the \textit{crank}. This statistic was found in 1988 by George Andrews and Frank Garvan, and its definition is given below.

\begin{defn}\label{defn:rank}
 Let $\lambda$ be an integer partition, $l(\lambda)$ be the largest part, $\omega(\lambda)$ be the number of $1$'s in $\lambda$, and $\mu(\lambda)$ be the number of parts in $\lambda$ that are larger than $\omega(\lambda)$. Then
 $$
 crank(\lambda):=
 \begin{cases}
     l(\lambda) & \text{ if } \omega(\lambda) = 0 \\
     \mu(\lambda) - \omega(\lambda) & \text{ if } \omega(\lambda) > 0,
 \end{cases}
 $$

with the generating function given by
$$
C(z,\tau) := \sum_{m=-\infty}^\infty\sum_{n=0}^\infty M(m,n)\zeta^mq^n = \prod_{n=1}^\infty \frac{1-q^n}{(1-\zeta q^n)(1-\zeta^{-1}q^n)},
$$
 where $q=e^{2\pi i \tau}$, $\zeta = e^{2\pi i z}$, and $M(m,n)$ is the number of partitions of $n$ with crank $m$.
 \end{defn}
 It should be noted that the validity of the crank can be seen from its generating function. Setting $\zeta = 1$ recovers the generating function for $p(n)$. In addition, we have that $$\Phi_5(\zeta) \divides [q^{5n+4}]C(z,\tau)$$ for all $n\geq 0$, where $[q^n]C(z,\tau)$ denotes the coefficient of $q^n$ in $C(z,\tau)$ and $\Phi_5(\zeta)$ is the 5th-cyclotomic polynomial. This criterion ensures that the partitions of $n$ are distributed equally among the residue classes mod $5$. Similarly, we have
$$\Phi_7(\zeta) \divides [q^{7n+5}]C(z,\tau)$$
and 
$$\Phi_{11}(\zeta) \divides [q^{11n+6}]C(z,\tau).$$
More generally, given a generating function $\sum_{n\geq 0 }a(n)q^n$  satisfying $a(\alpha n+\beta) \equiv 0 \pmod{\ell}$ for some positive integers $\alpha,\beta$, some prime $\ell$, and a power series $C(z,\tau) \in \mathbb{Z}[[q,\zeta]]$, we say the power series \textit{explains the congruence} if and only if 
$$
C(0,\tau) = \sum_{n\geq 0 }a(n)q^n
$$
and 
$$
\Phi_\ell(\zeta) \divides [q^{\alpha n+\beta}]C(z,\tau).
$$
When this is the case for at least 1 congruence, we say $C(z,\tau)$ is a \textit{crank generating function}. In 2020, Rolen, Tripp, and Wagner discovered a family of crank generating functions for $p_k(n)$, the partitions of $n$ into $k$-colors \cite{rolen2021cranks}. Their result was given by 
$$
C_k(z_1,\ldots z_{\lfloor\frac{k+1}{2}\rfloor};\tau) := C(0;\tau)^{\lfloor\frac{k}{2}\rfloor}\prod_{i=1}^{\lfloor\frac{k+1}{2}\rfloor}C(z_i,\tau),
$$
where $z_i = a_iz$ in which the $a_i \in \mathbb{Z}$ are chosen with respect to the congruence being proven. \\
More recently, the author discovered 3 families of crank generating functions for 
$$
\sum_{n\geq0}p_{k,j}(n)q^n = \prod_{n\geq1}\frac{(1+q^n)^j}{(1-q^n)^k},
$$ the partitions of $n$ into $(k+j)$-colors, where $j$-colors have distinct parts \cite{wilson2024cranks}. In this case, the three families came from the pairs $(k,j) = (2,\ell m+(\ell-1)), (3,\ell m+(\ell-3)),$ and $(\ell,\ell m+(\ell-3))$, where $\ell$ is a prime and $m$ is a positive integer. \\

The aim of this paper is to extend the notion of crank generating functions to $t$-core partitions, the generating function of which is given by
$$
\sum_{n\geq 0}p^{(t)}(n)q^n = \prod_{n\geq 1} \frac{(1-q^{tn})^t}{(1-q^n)}.
$$
Recently, Meher and Jindal proved some congruences for $p^{(t)}(n)$ using the theory of modular forms in \cite{meher2023arithmeticdensitycongruencestcore}. First, we give the following crank generating function, which explains the congruences for $p^{(5)}(n).$ For convenience, we set $(1-\zeta^{\pm k}q^n) =(1-\zeta^{k}q^n)(1-\zeta^{-k}q^n)$.

\begin{thm}\label{thm:5crank}
Let $p^{(5)}(n)$ denote the $5$-core partition function. Then,
    $$
    C^{(5)}(z,\tau) :=  \prod_{n=1}^\infty\frac{(1-q^n)(1-q^{5n})(1-\zeta^{\pm2}q^{5n})(1-\zeta^{\pm 4}q^{5n})}{(1-\zeta^{\pm1}q^n)}
    $$
    is a crank generating function for $p^{(5)}(n)$ which explains the congruences $p^{(5)}(15n+\beta) \equiv 0 \pmod{3}$, where $\beta \in \{6,10,12,13\}$.
\end{thm}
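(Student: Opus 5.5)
The plan is first to check the specialization $\zeta=1$, then to specialize $\zeta$ to a primitive cube root of unity $\omega$. At $\omega$ the product should collapse to an eta-quotient, and the divisibility by $\Phi_3(\zeta)$ becomes a statement about the vanishing of certain residue classes of that eta-quotient.

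\textbf{Step 1: the value at $z=0$.} Setting $\zeta=1$, every factor $(1-\zeta^{\pm k}q^{m})$ becomes $(1-q^m)^2$. Hence
$$C^{(5)}(0,\tau)=\prod_{n\ge1}\frac{(1-q^n)(1-q^{5n})^5}{(1-q^n)^2}=\prod_{n\ge1}\frac{(1-q^{5n})^5}{1-q^n},$$
which is the generating function of $p^{(5)}(n)$ given in the introduction.

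\textbf{Step 2: reduction to an eta-quotient.} Each coefficient $[q^N]C^{(5)}(z,\tau)$ is a Laurent polynomial in $\zeta$ with integer coefficients, since only finitely many factors contribute to a given power of $q$. Multiplying by a power of $\zeta$ and using that $\Phi_3$ is monic and irreducible over $\mathbb{Q}$, we get that $\Phi_3(\zeta)$ divides it if and only if it vanishes at $\zeta=\omega=e^{2\pi i/3}$. So we set $\zeta=\omega$ and simplify.
\begin{itemize}
\item Since $\omega^{\pm2}=\omega^{\mp1}$ and $\omega^{\pm4}=\omega^{\pm1}$, the numerator factors become $(1-\omega^{\pm1}q^{5n})^2=\bigl((1-q^{15n})/(1-q^{5n})\bigr)^2$.
\item In the denominator, $(1-\omega^{\pm1}q^n)=(1-q^{3n})/(1-q^n)$.
\end{itemize}
Writing $f_k=\prod_{n\ge1}(1-q^{kn})$, this gives
$$C^{(5)}\!\left(\tfrac13,\tau\right)=\frac{f_1^2\,f_{15}^2}{f_3\,f_5}.$$
Since $f_{15}^2$ is a series in $q^{15}$, the theorem reduces to the following claim: in the $15$-dissection of
$$G(q):=\frac{f_1^2}{f_3f_5}=\sum_{n\ge0} g(n)q^n,$$
the components with $n\equiv 6,10,12,13 \pmod{15}$ vanish identically.

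\textbf{Step 3: proving the vanishing.} I would try two routes, in this order.

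The first is a direct dissection.
\begin{itemize}
\item Write $G=\dfrac{f_1}{f_3}\cdot\dfrac{f_1}{f_5}$.
\item Use the Rogers--Ramanujan $5$-dissection $f_1=f_{25}\bigl(R^{-1}-q-q^2R\bigr)$ with $R=R(q^5)$, together with a $3$-dissection of $f_1^2/f_3$. The latter can be obtained from the cubic theta function $a(q)$ or from Jacobi's triple product applied to $f_1^2/f_3$.
\item Then collect terms modulo $15$ and check that the four residue classes $6,10,12,13$ receive no contribution.
\end{itemize}

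If the bookkeeping becomes unmanageable, the second route uses modular forms. For each $\beta$, extract $\sum_n g(15n+\beta)q^{15n+\beta}$ by averaging $G(\xi q)$ over the $15$th roots of unity $\xi$. After multiplying by a suitable power of $q$ and an eta-quotient, the statement becomes that a certain weakly holomorphic modular form on $\Gamma_1(225)$ (or a group of similar level) vanishes. Clearing poles by multiplying by a suitable eta-quotient and applying the Sturm bound, this reduces to checking finitely many coefficients of $G$. That check is a finite computation.

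\textbf{Expected main obstacle.} The hard step is Step 3: producing a clean identity for the vanishing, rather than the elementary reductions in Steps 1 and 2. In the modular-forms route, the delicate points are choosing the multiplier so that everything is holomorphic at all cusps, and keeping the Sturm bound small enough to be practical.
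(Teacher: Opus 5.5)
Your Steps 1 and 2 are correct and coincide with the paper's reduction. The paper also reduces modulo $\Phi_3(\zeta)$ to $\frac{f_1^2}{f_3f_5}f_{15}^2$ (its displayed congruence with $t=5$, $j=2$) and discards the factor in $q^{15}$.

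Everything nontrivial is in Step 3, and there your proposal remains a plan. The paper finishes with Radu's lemma ($\alpha=M=15$, $N=45$). This is a Sturm-type bound adapted to arithmetic progressions of eta-quotients, and it reduces the claim to a machine check of $g(15n+\beta')=0$ for $n\le 11$.

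Your second route is the same strategy with a blunter tool. After sieving you are on $\Gamma_1(225)$ in weight one, where the ordinary Sturm bound asks for over a thousand coefficients. That is still finite, so the route is sound. But you never fix the form or the bound, and you never do the check, so the decisive step is asserted rather than proved.

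Your first route fails as described. Write $A=\sum_i q^iA_i(q^3)$ and $B=\sum_j q^jB_j(q^5)$. Then $q^{i+j}A_i(q^3)B_j(q^5)$ is not confined to any residue classes modulo $15$, because $A_i(q^3)$ is unrestricted modulo $5$ and $B_j(q^5)$ is unrestricted modulo $3$. So nothing can be ``collected modulo $15$'' without $5$-dissecting every $A_i$ and $3$-dissecting every $B_j$.

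You can in fact avoid any machine check, as follows.
\begin{itemize}
\item The function $F(\tau):=q\,\frac{f_1^2f_{15}^2}{f_3f_5}=\frac{\eta(\tau)^2\eta(15\tau)^2}{\eta(3\tau)\eta(5\tau)}$ is holomorphic at every cusp and lies in $M_1\bigl(\Gamma_0(15),\bigl(\frac{-15}{\cdot}\bigr)\bigr)$.
\item The Eisenstein series $E=\sum_{m\ge1}\sum_{d\mid m}\bigl(\frac{-3}{d}\bigr)\bigl(\frac{m/d}{5}\bigr)q^m$ lies in the same space.
\item Both equal $q-2q^2+O(q^3)$, and the Sturm bound is $\frac{1}{12}[\Gamma:\Gamma_0(15)]=2$, so $F=E$.
\item Hence $[q^{15n+\beta}]C^{(5)}(\tfrac13,\tau)$ is the coefficient of $E$ at $m=15n+\beta+1\equiv 7,11,13,14\pmod{15}$. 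These are exactly the classes prime to $15$ with $\bigl(\frac{-15}{m}\bigr)=-1$.
\item Such an $m$ is exactly divisible by an odd power $p^e$ of some prime $p$ with $\bigl(\frac{-3}{p}\bigr)=-\bigl(\frac{p}{5}\bigr)$.
\item For that prime the multiplicative local factor $\sum_{i=0}^e\bigl(\frac{-3}{p}\bigr)^i\bigl(\frac{p}{5}\bigr)^{e-i}$ vanishes, so the coefficient is $0$.
\end{itemize}

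This treats all four $\beta$ at once and explains why exactly these residues occur. Radu's lemma buys a tiny, uniform check that also covers Theorem \ref{thm:multcrank}. The Eisenstein identification is specific to $t=5$ but is conceptual and computation-free.
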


We also give the following crank generating functions which explain some of the congruences proven by Meher and Jindal for $p^{(t)}(n)$ for $t >5$.

\begin{thm}\label{thm:multcrank}
    Let $p^{(t)}(n)$ denote the $t$-core partition function. Then,
    $$
    C^{(7)}(z,\tau) =  \prod_{n=1}^\infty\frac{(1-q^n)(1-q^{7n})^3(1-\zeta^{\pm2}q^{7n})(1-\zeta^{\pm 4}q^{7n})}{(1-\zeta^{\pm1}q^n)}, \\
    $$
    explains $p^{(7)}(21n+\beta) \equiv 0 \pmod{3}$, where $\beta \in \{8,11,17\}$,
    $$
    C^{(11)}(z,\tau) =  \prod_{n=1}^\infty\frac{(1-q^n)(1-q^{11n})^3(1-\zeta^{\pm2}q^{11n})(1-\zeta^{\pm 4}q^{11n})(1-\zeta^{\pm 8}q^{11n})(1-\zeta^{\pm 10}q^{11n})}{(1-\zeta^{\pm1}q^n)},
    $$
    explains $p^{(11)}(33n+\beta) \equiv 0 \pmod{3}$, where $\beta \in \{11, 20, 26, 29, 32\}$,
    $$
    C^{(17)}(z,\tau) =  \prod_{n=1}^\infty\frac{(1-q^n)(1-q^{17n})^5(1-\zeta^{\pm2}q^{17n})(1-\zeta^{\pm 4}q^{17n})(1-\zeta^{\pm 8}q^{17n})(1-\zeta^{\pm 10}q^{17n})(1-\zeta^{\pm 14}q^{17n})(1-\zeta^{\pm 16}q^{17n})}{(1-\zeta^{\pm1}q^n)},
    $$
    explains $p^{(17)}(51n+\beta) \equiv 0 \pmod{3}$, where $\beta \in \{14,20,23,26,35,38,41,47\}$, and
    $$
    C^{(19)}(z,\tau) =  \prod_{n=1}^\infty\frac{(1-q^n)(1-q^{19n})^7(1-\zeta^{\pm2}q^{19n})(1-\zeta^{\pm 4}q^{19n})(1-\zeta^{\pm 8}q^{19n})(1-\zeta^{\pm 10}q^{19n})(1-\zeta^{\pm 14}q^{19n})(1-\zeta^{\pm 16}q^{19n})}{(1-\zeta^{\pm1}q^n)}
    $$
    explains $p^{(19)}(57n+\beta) \equiv 0 \pmod{3}$, where $\beta \in \{14,17,26,35,38,41,44,50,56\}$.
\end{thm}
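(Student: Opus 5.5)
The plan is to specialize $\zeta$ to a primitive cube root of unity $\omega$. Divisibility $\Phi_3(\zeta)\mid[q^{3tn+\beta}]C^{(t)}(z,\tau)$ for a polynomial in $\zeta^{\pm1}$ with integer coefficients is equivalent to the vanishing of that coefficient at $\zeta=\omega$. So there are two things to check for each $t\in\{7,11,17,19\}$. First, setting $\zeta=1$ must recover $\prod(1-q^{tn})^t/(1-q^n)$. Second, every coefficient of $q^{3tn+\beta}$ in $C^{(t)}$ evaluated at $\zeta=\omega$ must vanish for the listed $\beta$.

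The first check is bookkeeping. At $\zeta=1$ each factor $(1-\zeta^{\pm k}q^{tn})$ contributes $(1-q^{tn})^2$, and the denominator contributes $(1-q^n)^2$. The exponent of $(1-q^{tn})$ is therefore $3+4=7$, $3+8=11$, $5+12=17$ and $7+12=19$ respectively, and the $q^n$-part becomes $(1-q^n)/(1-q^n)^2=1/(1-q^n)$, as required.

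For the second check, observe that every exponent $k\in\{2,4,8,10,14,16\}$ is prime to $3$, so $\omega^{\pm k}=\omega^{\pm1}$. Hence
\[
(1-\omega^{k}x)(1-\omega^{-k}x)=1+x+x^2=\frac{1-x^3}{1-x}.
\]
Writing $a$ for the exponent of $(1-q^{tn})$ and $b$ for the number of $\pm$-pairs, this gives
\[
C^{(t)}(\omega,\tau)=\prod_{n\ge1}\frac{(1-q^n)^2(1-q^{tn})^{a-b}(1-q^{3tn})^{b}}{1-q^{3n}},
\]
with $a-b=1,-1,-1,1$ for $t=7,11,17,19$. The factor $(1-q^{3tn})^b/(1-q^{3n})$ is a series in $q^3$. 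So the problem reduces to understanding the $3$-dissection of $G_t(q)=(q;q)_\infty^2(q^t;q^t)_\infty^{\pm1}$, and then its interaction with the residues modulo $t$.

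I would use the classical $3$-dissection of Euler's product, $(q;q)_\infty=(q^9;q^9)_\infty\bigl(A(q^3)-q-q^2A(q^3)^{-1}\bigr)$ with $A$ the usual cubic continued-fraction-type quotient, or equivalently the Jacobi triple product split by exponent class. Applying it to $(q;q)^2$ and to $(q^t;q^t)^{\pm1}$ (which is the same dissection in $q^t$, and $t\not\equiv0\pmod 3$ permutes classes) produces, for each residue $r$ of $\beta$ modulo $3$, the component of $C^{(t)}(\omega,\tau)$ supported on $q^{3m+r}$. After replacing $q^3\mapsto q$, each such component should collapse, via the product identities, to a short eta quotient in $q$ and $q^t$. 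The expected shape is a theta-type series (Jacobi's $(q;q)^3$ or the pentagonal series) times something in $q^t$. The support of a theta series lies on values of a quadratic form, so its exponents modulo $t$ avoid certain residues: quadratic nonresidue classes shifted by the constant coming from $r$. The listed $\beta$ should be exactly the classes $3tn+\beta$ whose image $m=(\beta-r)/3$ modulo $t$ falls outside the support. Together with the fact that the $q^t$-factor only shifts by multiples of $t$, this gives vanishing.

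The main obstacle is this last step. One has to produce the exact $3$-dissection identity for $G_t$ that leaves a single theta series, rather than a sum of several series whose supports overlap, and then verify the residue bookkeeping modulo $t$ for each of the listed $\beta$. If a single-theta form is not available, I would fall back on writing each component as a product of two theta series $\sum q^{Q_1(x)}\cdot\sum q^{tQ_2(y)}$. I would then show that $Q_1(x)\equiv m\pmod t$ has no solutions for the relevant $m$ by a Legendre-symbol computation modulo $t$, checking the finitely many residues per $t$ directly.
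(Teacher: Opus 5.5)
Your reduction is correct and matches the paper's. Divisibility by $\Phi_3(\zeta)$ is the same as vanishing at $\zeta=\omega$, your $\zeta=1$ check is right, and $C^{(t)}(\omega,\tau)=\prod_{n\ge1}(1-q^n)^2(1-q^{tn})^{\pm1}(1-q^{3tn})^b/(1-q^{3n})$, where the $q^{3t}$-factor can be dropped. But that is the easy half. The substance of the theorem is the \emph{exact} vanishing of $[q^{3tn+\beta}]$ of this eta quotient for the listed $\beta$ (all $\equiv 2\pmod 3$). For this you supply no identity, only the hope that the $q^{3m+2}$-component ``should collapse'' to a theta series with the right support modulo $t$, and you flag it yourself as unresolved. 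The paper closes exactly this step with Radu's lemma. It verifies $(3t,3t,N,(2,-1,\pm1,0),\beta)\in\Delta^*$ and picks $a$ with $p_a+p_a^*\ge 0$ at every cusp. That reduces the infinite statement, for all $\beta'\in P_{\alpha,r}(\beta)$ at once, to checking the coefficients with $n\le\lfloor\nu\rfloor$ ($15,7,17,26$), which is done by computer.

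The sketch would also not go through as written.

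\emph{The quoted dissection is false.} The form $(q;q)_\infty=(q^9;q^9)_\infty(A(q^3)-q-q^2/A(q^3))$ is the shape of Ramanujan's $5$-dissection with $(q^{25};q^{25})_\infty$. The $q^{3m+1}$-part of $(q;q)_\infty$ is $-q+q^7+q^{22}-\cdots$, not $-q(q^9;q^9)_\infty=-q+q^{10}+\cdots$. The triple product actually gives $(q;q)_\infty=(q^{27};q^{27})_\infty\bigl[(q^{12},q^{15};q^{27})_\infty-q(q^{6},q^{21};q^{27})_\infty-q^2(q^{3},q^{24};q^{27})_\infty\bigr]$. So the $q^{3m+2}$-part of $(q;q)_\infty^2(q^t;q^t)_\infty^{\pm1}$ is a sum of several cross products of different theta functions, not a single theta series.

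\emph{The remaining factor is not harmless.} Your claim that what remains ``only shifts by multiples of $t$'' overlooks $1/(q^3;q^3)_\infty$. After $q^3\mapsto q$ this becomes $1/(q;q)_\infty$, which is not a series in $q^t$, and multiplying by it destroys any residue-class support property. Vanishing can therefore only come from cancellation among terms. Neither your single-theta route nor the fallback $\sum q^{Q_1(x)}\sum q^{tQ_2(y)}$ with a Legendre-symbol count addresses this.

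The vanishing is genuine, and the orbit sizes $(t-1)/2$ even suggest a quadratic-character mechanism, so a conceptual proof may exist. But you would have to state and prove explicit $q$-series identities for each $t$. In practice those identities themselves again require a Sturm- or Radu-type finite check.
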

\textit{Remark.} We note a couple of facts. First, each $C^{(t)}(z,\tau)$, for $t > 5$, only explain half of the congruences that appear in the literature for $p^{(t)}(n)$. Second, there are congruences for $13$-core and $23$-core partitions for which we do not have a crank generating function. A description for these is given in the concluding section. Also, while numerical evidence suggests there may be congruences for $t>23$, these have not yet appeared in the literature. Hence we concern ourselves with $t \leq 23$ for now. \\

We now give an outline for the rest of the paper. In Section 2, we briefly describe Radu's lemma and the necessary prerequisites. Section 3 contains the proofs for Theorems \ref{thm:5crank} and \ref{thm:multcrank}. We will then conclude with a discussion on the crank generating functions and some questions.

\section{Radu's Lemma}

In order to prove our main theorems, we will utilize a lemma of Radu, Lemma \ref{mainlemma}. To do so, we will first define some prerequisites needed for the statement of the lemma. \\

Following Radu and Sellers in \cite{RaduSellers}, let $R(M)$ be the set of integer sequences indexed by $\{\delta_1,...,\delta_k\}$, the divisors of $M$, and let $r \in R(M)$ be defined by $r:=(r_{\delta_1},r_{\delta_2},...,r_{\delta_k})$. Also, let $\alpha$ be a positive integer. Define $[s]_\alpha$ to be the equivalence class of $s \pmod{\alpha}$,
and let $\mathbb{S}_\alpha \subset \mathbb{Z}^*_\alpha$ be the set of all squares in $\mathbb{Z}^*_\alpha$, the invertible elements of $\mathbb{Z}_\alpha$. We now give a few necessary definitions.
\begin{defn}\label{defn:P(t)}
    Let $\alpha$ and $M$ be positive integers, $r \in R(M)$, and $\beta\in\{0,1,\ldots,\alpha-1\}.$ We define the map $\overline{\odot}_r: \mathbb{S}_{24\alpha} \times \{0,1,\ldots,\alpha-1\} \to \{0,1,\ldots,\alpha-1\}$ by 
    $$
    [s]_{24\alpha} \overline{\odot}_r \beta:=\beta s+\frac{s-1}{24}\sum_{\delta | M}\delta r_\delta \pmod{\alpha}.
    $$
    Also, define 
    $$
    P_{\alpha,r}(\beta) := \{[s]_{24\alpha} \pmap \beta \,\,|\,\, [s]_{24\alpha} \in \mathbb{S}_{24\alpha}\}.
    $$
\end{defn}

\begin{defn}\label{defn:gamma}
    Let $\Gamma$ be defined as the modular group,
    $$
    \Gamma := SL_2(\mathbb{Z}) = \left\{ \begin{pmatrix} a &b \\c &d\end{pmatrix}\, , \, a,b,c,d \in \mathbb{Z} \text{ and } ad-bc = 1\right\}.
    $$
    Similarly, let $\Gamma_0(N)$ and $\Gamma_\infty$ be defined as
    $$
    \Gamma_0(N) := \left\{ \begin{pmatrix} a &b \\c &d\end{pmatrix} \in \Gamma \,,\, N|c \right\}
    $$
    and
    $$
    \Gamma_{\infty} := \left\{ \begin{pmatrix} 1 & b \\0 &1\end{pmatrix} \,,\, b \in \mathbb{Z}\right\}.
    $$
\end{defn}
It is well known that
$$
[\Gamma:\Gamma_0(N)] = N\prod_{p |N}\left(1+\frac{1}{p}\right).
$$

We will also need the following lemma from \cite{WANG_2017}.

\begin{lem}
    Suppose $N$ or $\frac{N}{2}$ is square-free. Then
    $$
    \bigcup_{\delta|N}\Gamma_0(N) \begin{pmatrix}
        1&0 \\\delta &1 \end{pmatrix} \Gamma_{\infty} = \Gamma.
    $$
\end{lem}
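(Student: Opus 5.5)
The plan is to show directly that every $\gamma=\begin{pmatrix} a&b\\ c&d\end{pmatrix}\in\Gamma$ lies in one of the double cosets. Concretely, for a suitable divisor $\delta\mid N$ and integer $k$ I want
$$
\gamma\begin{pmatrix}1&-k\\0&1\end{pmatrix}\begin{pmatrix}1&0\\-\delta&1\end{pmatrix}\in\Gamma_0(N).
$$
Then $\gamma\in\Gamma_0(N)\begin{pmatrix}1&0\\ \delta&1\end{pmatrix}\begin{pmatrix}1&k\\0&1\end{pmatrix}$, which is contained in the $\delta$-double coset. Multiplying out, the lower-left entry of the product on the left is
$$
c-\delta(d-ck)=c(1+\delta k)-\delta d.
$$
So everything reduces to finding $\delta\mid N$ and $k\in\mathbb{Z}$ with
$$
N \mid c(1+\delta k)-\delta d.
$$

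I will choose $\delta=\gcd(c,N)$, with the convention that $\gcd(0,N)=N$. Write $c=\delta c'$ and $N=\delta N'$, so that $\gcd(c',N')=1$. Since $ad-bc=1$, we have $\gcd(c,d)=1$, hence $\gcd(\delta,d)=1$. Dividing the required divisibility by $\delta$, it becomes
$$
N'\mid c'(1+\delta k)-d.
$$
Because $c'$ is invertible mod $N'$, this is equivalent to
$$
\delta k\equiv d\,c'^{-1}-1 \pmod{N'}.
$$
This linear congruence in $k$ is solvable if and only if $g:=\gcd(\delta,N')$ divides $d\,c'^{-1}-1$. The cases $c=0$ or $\delta=N$ are trivial, since then $N'=1$.

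The main point, and the only place the hypothesis enters, is controlling $g$. Note that $g^2$ divides $\delta N'=N$.
\begin{itemize}
\item If $N$ is square-free, then $g=1$ and the congruence is always solvable.
\item If instead $N/2$ is square-free (and $N$ is not), then $g\in\{1,2\}$. The case $g=1$ is as before.
\item If $g=2$, then $\delta$ is even, so $c$ is even and hence $d$ is odd, because $\gcd(c,d)=1$. Also $N'$ is even, so $c'$, being coprime to $N'$, is odd. Thus $d\,c'^{-1}\equiv 1\pmod 2$, and the congruence is again solvable.
\end{itemize}

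Having found $\delta$ and $k$ in every case, $\gamma$ lies in the union. The reverse inclusion is trivial, since each double coset is a subset of $\Gamma$. I expect the only delicate step to be this case $g=2$: one must use the parity forced by the determinant condition, and that is exactly where "$N$ or $N/2$ square-free" is needed.
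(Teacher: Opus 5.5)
Your proof is correct. With $\delta=\gcd(c,N)$, each step checks out: the reduction to $N'\mid c'(1+\delta k)-d$, the solvability criterion $g=\gcd(\delta,N')\mid dc'^{-1}-1$, the observation $g^2\mid N$, and the parity argument for $g=2$. The degenerate case $c=0$ is also handled correctly.

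The paper does not prove this lemma. It imports it from Wang \cite{WANG_2017}, so there is no internal argument to compare against.

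The standard route, which the paper alludes to in the proof of Theorem~\ref{thm:5crank}, goes through cusps. The double coset $\Gamma_0(N)\gamma\Gamma_\infty$ is determined by the $\Gamma_0(N)$-class of the cusp $\gamma\infty=a/c$. That class is determined by $\delta=\gcd(c,N)$ together with the unit $a\cdot(c/\delta)$ modulo $\gcd(\delta,N/\delta)$. This gives $\phi(\gcd(\delta,N/\delta))$ double cosets for each $\delta\mid N$.

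Your criterion is this invariant in disguise. Since $g\mid c$, we have $ad\equiv 1\pmod g$, so the condition $dc'^{-1}\equiv 1\pmod g$ says $ac'\equiv 1\pmod g$. That is, $a/c$ has the same invariant as $1/\delta$. The hypothesis is used only to force $g\mid 2$, and modulo $2$ there is a single unit class.

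The two routes trade off as follows:
\begin{itemize}
\item Your version is self-contained and explicit: it produces the translation $k$ and the element of $\Gamma_0(N)$ directly.
\item The cusp version relies on a quoted classification, but it immediately gives the number of double cosets when the hypothesis fails. For example, for $N=45$ the divisors $\delta=3$ and $\delta=15$ each carry two classes, which is why the paper has to find representatives by hand there.
\end{itemize}

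Finally, your argument only ever uses $g\mid 2$. So it proves the conclusion whenever $16\nmid N$ and $p^2\nmid N$ for every odd prime $p$ (e.g.\ $N=8$), which is slightly more than the lemma states.
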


We now state the conditions needed for Lemma \ref{mainlemma}.

Let $M\in \mathbb{Z}^+$ and $r_\delta \in \mathbb{Z}$ for all $\delta \divides M$. Suppose one has a $q$-series given by 
$$
\prod_{\delta|M}\prod_{n\geq 1}(1-q^{\delta n})^{r_{\delta}} = \sum_{n\geq 0} a(n)q^n,
$$
which we wish to show satisfies the family of congruences $a(\alpha n+\beta') = 0$ for all $\beta' \in P_{\alpha,r}(\beta)$ and for all non-negative integers $n$.  \\

Define $\Delta^*$ by the following. Let $(\alpha,M,N,r,\beta)$ be in $\Delta^*$ if and only if 
\begin{enumerate}
    \item $\alpha,M,$ and $N$ are positive integers, $r \in R(M)$, and $\beta\in \{0,1,\ldots,\alpha-1\}$.
    \item For all primes $p$, $p|\alpha$ implies $p|N$.
    \item For all $\delta |M$ such that $r_{\delta} \neq 0$, $\delta|\alpha N$.
    \item $\kappa N \displaystyle\sum_{\delta|M}r_\delta \frac{\alpha N}{\delta} \equiv 0 \pmod{24}$.
    \item $\kappa N \displaystyle\sum_{\delta|M}r_\delta \equiv 0 \pmod{8}$.
    \item $\displaystyle{\frac{24}{\gcd(\kappa(-24\beta-\sum_{\delta|M}\delta r_{\delta}),24\alpha)}}\divides N$.
    \item $(s,j) = \pi(M,(r_\delta))$ implies $(4|\kappa N$ and $8|sN)$ or $(2|s$ and $8|N(1-j))$ if $2|\alpha$.
\end{enumerate}
where $\kappa = gcd(\alpha^2-1,24)$, and $\pi(M,(r_\delta)) = (s,j)$ is defined by $\prod_{\delta|M}\delta^{|r_\delta|} = 2^sj$, where $s$ is a positive integer and $j$ is odd. Also, for $\gamma := \begin{pmatrix}
    a & b \\ c& d
\end{pmatrix}$ define

$$
p_{a}(\gamma):= \min_{\lambda \in \{1,2,\ldots,\alpha-1\}} \frac{1}{24}\sum_{\delta |M} r_\delta \frac{\gcd^2(\delta(a+\kappa \lambda c)\alpha c)}{\delta \alpha}
$$

and

$$
p_a^*(\gamma) := \frac{1}{24}\sum_{\delta|M}\frac{a_\delta \gcd^2(\delta,c)}{\delta}.
$$

A special case of Radu's lemma is now given by the following.

\begin{lem}\label{mainlemma}
Let $(\alpha, M, N, r,\beta) \in \Delta^*, a=(a_\delta) \in R(N)$ and $\gamma := \{\gamma_1,\ldots,\gamma_n\} \subset \Gamma$ be a complete list of double-coset representatives in $\Gamma_0(N)\backslash\Gamma/\Gamma_{\infty}$. Suppose $p_{a}(\gamma_i)+p_a^*(\gamma_i)\geq 0$ for all $\gamma_i \in \gamma$, and $\beta_{min} := \min\{\beta' \,\,|\,\, \beta' \in P_{\alpha,r}(\beta)\}$. If
$$
\nu :=\frac{1}{24}\left(\left(\sum_{\delta|N}a_\delta \,+\,\sum_{\delta|M}r_{\delta}\right) [\Gamma:\Gamma_0(N)]-\sum_{\delta|N}\delta a_\delta\right)- \frac{1}{24\alpha}\sum_{\delta |M}\delta r_\delta - \frac{\beta_{min}}{\alpha},
$$
then
$$
\sum_{n=0}^{\lfloor \nu \rfloor}a(\alpha n+\beta')q^n = 0 
$$
for all $\beta' \in P_{\alpha,r}(\beta)$ implies
$$
\sum_{n\geq0}a(\alpha n+\beta')q^n = 0
$$
for all $\beta' \in P_{\alpha,r}(\beta)$.
\end{lem}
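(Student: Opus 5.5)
The plan is to follow Radu's original argument and treat the lemma as a Sturm-type statement for a suitably built modular function on $\Gamma_0(N)$. Write $f_r(\tau)=\prod_{\delta|M}\eta(\delta\tau)^{r_\delta}$, so that $q^{\frac{1}{24}\sum\delta r_\delta}\sum a(n)q^n = f_r(\tau)$. For each $\beta'\in P_{\alpha,r}(\beta)$ set
$$
g_{\beta'}(\tau):=q^{\frac{24\beta'+\sum_{\delta}\delta r_\delta}{24\alpha}}\sum_{n\ge0}a(\alpha n+\beta')q^n
=\frac{1}{\alpha}\sum_{\lambda=0}^{\alpha-1}e^{-2\pi i\lambda\frac{24\beta'+\sum\delta r_\delta}{24\alpha}} f_r\!\left(\frac{\tau+\lambda}{\alpha}\right).
$$
Then consider
$$
F(\tau):=\prod_{\delta|N}\eta(\delta\tau)^{a_\delta}\prod_{\beta'\in P_{\alpha,r}(\beta)} g_{\beta'}(\tau).
$$

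\textbf{Step 1: $F$ is a modular function on $\Gamma_0(N)$.} For $\gamma\in\Gamma_0(N)$, I would compute $g_{\beta'}|\gamma$ by writing each matrix $\begin{pmatrix}1&\lambda\\0&\alpha\end{pmatrix}\gamma$ as $\gamma'\begin{pmatrix}1&\lambda'\\0&\alpha\end{pmatrix}$ with $\gamma'\in\Gamma_0(\alpha N)$-type, and then applying the eta-multiplier transformation law. The conditions (2)--(5) and (7) of $\Delta^*$ are exactly what make the resulting multipliers (Dedekind sums, the $\kappa$-twists and the $2$-adic Jacobi symbol issues) collapse. The outcome is $g_{\beta'}|\gamma=\epsilon\, g_{[s]\pmap\beta'}$ with $s\equiv d^2$-type. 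This is where the definition of $\pmap$ and the orbit $P_{\alpha,r}(\beta)$ enter, and it is why the product over the whole orbit is invariant. Condition (6) ensures the character on the orbit is trivial, and the eta quotient in $a$ supplies the remaining weight and character correction. So $F$ has weight $0$ and trivial multiplier on $\Gamma_0(N)$.

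\textbf{Step 2: orders at cusps.} Using the double-coset representatives $\gamma_i$, whose existence is supplied by the lemma of \cite{WANG_2017} when $N$ or $N/2$ is squarefree, I would compute the leading exponent of $F|\gamma_i$. The classical formula for the order of an eta quotient at a cusp $a/c$ gives the term $p_a^*(\gamma_i)$. For the $g_{\beta'}$ factors, expanding $f_r\big((\gamma_i\tau+\lambda)/\alpha\big)$ gives the term $p_a(\gamma_i)$ as the minimum over $\lambda$. The hypothesis $p_a+p_a^*\ge0$ then says that $F$ is holomorphic at every cusp except possibly $\infty$.

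\textbf{Step 3: valence argument.} A weight-zero modular function on $\Gamma_0(N)$ that is holomorphic away from $\infty$ and vanishes at $\infty$ to order exceeding the total available pole order must be identically zero. Concretely, I would multiply $F$ by a suitable power of $\Delta(\tau)$ to get a holomorphic form and apply Sturm's bound, whose index term is $[\Gamma:\Gamma_0(N)]$. Bookkeeping of the leading exponents, namely the contributions $\frac1{24}\sum\delta a_\delta$, $\frac{1}{24\alpha}\sum\delta r_\delta$ and $\beta_{min}/\alpha$, produces exactly the bound $\nu$.

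Hence vanishing of the coefficients up to $\lfloor\nu\rfloor$ forces $F\equiv0$. Since $\prod\eta^{a_\delta}\ne0$ and the ring of $q$-series is an integral domain, some $g_{\beta'}$ must vanish. Step 1 shows that $\Gamma_0(N)$ permutes the $g_{\beta'}$ transitively up to constants, so every $g_{\beta'}$ vanishes, which is the claimed conclusion.

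\textbf{Main obstacle.} The hardest part will be Step 1, the multiplier computation. Tracking the eta-multiplier through $\begin{pmatrix}1&\lambda\\0&\alpha\end{pmatrix}\gamma$, and showing that the accumulated roots of unity depend only on $\beta'$ through $[s]\pmap\beta'$, requires the full strength of conditions (4)--(7). I would first attempt it by reducing to Radu's treatment for $\kappa$ and handling $2|\alpha$ separately via condition (7).
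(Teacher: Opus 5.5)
The paper does not prove this lemma. It quotes it as a special case of Radu's lemma, so the comparison is with Radu's argument. You have the right ingredients: the $g_{\beta'}$, the fact that $\Gamma_0(N)$ sends each $g_{\beta'}$ to a constant multiple of some $g_{\beta''}$ with $\beta''$ in the orbit, the cusp bounds $p_a,p_a^*$, and a Sturm-type finish. However, your auxiliary function is the wrong one, and Steps 1--3 fail as stated, for three reasons.

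\textbf{Weight.} Write $|P|:=|P_{\alpha,r}(\beta)|$. Your $F$ has weight $\frac12\left(\sum_\delta a_\delta+|P|\sum_\delta r_\delta\right)$, and nothing forces this to be zero. The lemma imposes no condition on $a$ except $p_a+p_a^*\ge 0$. In the paper's $t=11$ application, $a=(4,0,0,0)$ and $\sum r_\delta=0$, so $F$ has weight $2$. The eta quotient in $a$ is not a weight or character correction. Since every $\delta\mid N$, it transforms under $\Gamma_0(N)$ with a root-of-unity multiplier anyway; its only role is to cancel the poles of $g_{\beta'}$ at the cusps.

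\textbf{Cusp orders.} $p_a(\gamma_i)$ is a lower bound for the order of a single $g_{\beta'}|\gamma_i$, uniformly in $\beta'$. So the hypothesis controls $\prod\eta(\delta\tau)^{a_\delta}\cdot g_{\beta'}$ for one $\beta'$ only. For your $F$ it gives only $p_a^*(\gamma_i)+|P|\,p_a(\gamma_i)$, which can be negative. For $t=11$ at the cusp $0$: $\kappa=8$, $p_a=-1/6$ (attained at $\lambda=11$), $p_a^*=1/6$, and $|P|=5$.

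\textbf{The bound.} A weight-$0$ function that is holomorphic away from $\infty$ yields no index term at all, so no bookkeeping can produce $\nu$. The term $\frac1{24}\left(\sum a_\delta+\sum r_\delta\right)[\Gamma:\Gamma_0(N)]$ equals $\frac{k}{12}[\Gamma:\Gamma_0(N)]$ with $k=\frac12\left(\sum a_\delta+\sum r_\delta\right)$. That is the weight of the eta quotient times \emph{one} $g_{\beta'}$.

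The missing idea is to attach a copy of the eta quotient to each $g_{\beta'}$. Put $G_{\beta'}:=\prod_{\delta\mid N}\eta(\delta\tau)^{a_\delta}\,g_{\beta'}(\tau)$, which has weight $k$.

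The transformation part of your Step 1 (where $\Delta^*$ is used) gives $G_{\beta'}|_k\gamma=c_\gamma\,G_{\beta''}$ for $\gamma\in\Gamma_0(N)$, with $\beta'\mapsto\beta''$ a permutation of $P_{\alpha,r}(\beta)$. Write an arbitrary $\gamma\in\Gamma$ as $\gamma_0\gamma_iT^h$ with $\gamma_0\in\Gamma_0(N)$. Then every $G_{\beta'}|_k\gamma$ has order at least $p_a(\gamma_i)+p_a^*(\gamma_i)\ge0$ at $\infty$. Hence $F:=\prod_{\beta'}G_{\beta'}$ is a holomorphic form of weight $|P|k$ on $\Gamma_0(N)$ with a finite-order multiplier.

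Now suppose $a(\alpha n+\beta')=0$ for $n\le\lfloor\nu\rfloor$. Then for every $\beta'$,
$\operatorname{ord}_\infty G_{\beta'}\ge\lfloor\nu\rfloor+1+\frac1{24}\sum\delta a_\delta+\frac{24\beta'+\sum\delta r_\delta}{24\alpha}>\frac{k}{12}[\Gamma:\Gamma_0(N)]$;
this is exactly where $\beta_{min}$ enters. So $\operatorname{ord}_\infty F>\frac{|P|k}{12}[\Gamma:\Gamma_0(N)]$. The valence formula, applied to a power of $F$ with trivial multiplier, then forces $F=0$. Some $G_{\beta'}$ must vanish, and transitivity as in your last paragraph finishes. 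This weight-$k$ structure is what Radu's bound $\nu$ reflects.

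Finally, you do not need Wang's lemma. The $\gamma_i$ are part of the hypothesis, and the lemma does not assume $N$ or $N/2$ squarefree; the paper applies it with $N=45$.
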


The benefit of Lemma \ref{mainlemma} is twofold. First, it applies to any quotient of the form 
$$
\prod_{\delta|M}\prod_{n\geq 1}(1-q^{\delta n})^{r_{\delta}} = \sum_{n\geq 0} a(n)q^n,
$$
not just modular forms. Secondly, in many cases it greatly reduces the upper bound compared to Sturm's theorem. \\

\textit{Remark:} Note that Lemma \ref{mainlemma} as stated is only useful in proving where $\sum_{n\geq0}a(\alpha n+\beta')q^n = 0$. In fact, Radu's lemma is more general, allowing us to show when $\sum_{n\geq0}a(\alpha n+\beta')q^n \equiv 0 \pmod{u}$, for any positive integer $u$. As we are only interested in the case when our generating functions are identically 0, we give only the version of the theorem that is necessary.

\section{Main Theorems}
In order to prove the main theorems, it suffices to show that $[q^{3t n+\beta}]C^{(t)}(z,\tau) \equiv 0 \pmod{\Phi_3(\zeta)}$ for all pairs $(t,\beta)$ covered by Theorems \ref{thm:5crank} and \ref{thm:multcrank}. \\

It is readily verifiable that we have the following congruences.
$$
C^{(t)}(z,\tau) \equiv \prod_{k\geq 1}\frac{(1-q^k)^2}{(1-q^{3k})(1-q^{tk})} \cdot (1-q^{3tk})^j\pmod{\Phi_3(\zeta)} \
$$
if $t \equiv 2 \pmod{3}$, and 
$$
C^{(t)}(z,\tau) \equiv \prod_{n \geq 1}\frac{(1-q^k)^2(1-q^{tk})}{(1-q^{3k})} \cdot (1-q^{3tk})^j\pmod{\Phi_3(\zeta)}
$$
if $t \equiv 1 \pmod{3}$, where $j$ is a positive integer that depends on $t$.  \\

Note that since we are concerned with the coefficients $[q^{3t n+\beta}]C^{(t)}(z,\tau)$, we may ignore the $(1-q^{3tk})^j$ term in the above products. In particular, it suffices to use Lemma \ref{mainlemma} to show 
$$
[q^{3tn+\beta}]\prod_{k\geq 1}\frac{(1-q^k)^2}{(1-q^{3k})(1-q^{tk})} = 0 \quad \text{ or } \quad [q^{3tn+\beta}]\prod_{k\geq 1}\frac{(1-q^k)^2(1-q^{tk})}{(1-q^{3k})} = 0,
$$ 
depending on the value of $t \pmod{3}$. \\

All of the following calculations have been verified using MAPLE. \\

\textit{Remark} It should be noted that the proof of Theorem \ref{thm:5crank} differs slightly from the proof of Theorem \ref{thm:multcrank}, because it is the only example where neither $N$ nor $N/2$ is square-free. Therefore we must manually find the double-coset representatives. Otherwise, the proofs are analogous.

\begin{proof}[Proof of Theorem \ref{thm:5crank}]
   Note that $(15,15,45,(2,-1,1,0),6)$ lies in $\Delta^*$. Thus we may apply Lemma \ref{mainlemma}. To do so, we need to find $a \in R(45)$, and a set of double-coset representatives for $\Gamma_0(45)\backslash\Gamma/\Gamma_{\infty}$ such that $P_{\alpha,r}(\gamma_i)+p_a^*(\gamma_i)\geq 0$. Recall that the equivalence classes of cusps in $\Gamma_0(N)$ are in bijection with the desired double-coset representatives (see \cite{MR2112196} for example). The corresponding cusps for $\Gamma_0(45)$ are 
   $$
   \left\{ \infty,0,\frac{1}{3},\frac{1}{5},\frac{1}{9},\frac{1}{15},\frac{2}{15},\frac{4}{45} \right\},
   $$
   These give the following double-coset representatives,
   $$
   \left\{  
   \begin{pmatrix} 1 &0 \\0&1 \end{pmatrix}, 
   \begin{pmatrix} 0 &-1 \\1&0 \end{pmatrix},
   \begin{pmatrix} 1 &0 \\3&1 \end{pmatrix}, 
   \begin{pmatrix} 1 &0 \\5&1 \end{pmatrix}, 
   \begin{pmatrix} 1 &0 \\9&1 \end{pmatrix}, 
   \begin{pmatrix} 1 &0 \\15&1 \end{pmatrix}, 
   \begin{pmatrix} 2 &-1 \\15&-7 \end{pmatrix}, 
   \begin{pmatrix} 4 &-1 \\45&-11 \end{pmatrix}
   \right\}.
   $$
   Choosing $a = (2, 0, 0, 0, 0, 0)$ satisfies the desired conditions and yields $\lfloor \nu \rfloor = 11$. The last condition is verified easily in MAPLE.
\end{proof}

\begin{proof}[Proof of Theorem \ref{thm:multcrank}]
    The following tables summarize the tuples needed to satisfy the conditions of Lemma \ref{mainlemma}, thus proving the desired congruences.
    \begin{center}
    \begin{table}[h!]
    \begin{tabular}{| c c c c c c c c|} 
     \hline
     $t$-core & $\alpha$ & $M$ & $N$ & $r$ & $P_{\alpha,r}(\beta)$& $a$ & $\lfloor \nu \rfloor$ \\ [0.5ex] 
     \hline\hline
     7 & 21 & 21 & 42 & (2,-1,1,0) & $P_{\alpha,r}(8)$ = \{8,11,17\}& (2,0,0,0,0,0,0,0) & 15 \\ 
     11 & 33 & 33 & 33 & (2,-1,-1,0) & $P_{\alpha,r}(11) = \{11,20,26,29,32\}$& (4,0,0,0) & 7 \\
    17 & 51 & 51 & 51 & (2,-1,-1,0) & $P_{\alpha,r}(14) = \{14,20,23,26,35,38,41,47\}$& (6,0,0,0) & 17 \\
    19 & 57 & 57 & 57 & (2,-1,1,0) & $P_{\alpha,r}(14) = \{14,17,26,35,38,41,44,50,56\}$& (6,0,0,0) & 26 \\
    \hline
    \end{tabular}
    \caption{Tuple values for the various crank generating functions.}
    \label{table:1}
    \end{table}
    \end{center}

\end{proof}

\section{Conclusion and Discussion}
Regarding the remark in Section 1, we note that the $t$-core partition functions considered (for $t > 5$) actually satisfy more congruences than our crank generating functions explain (see \cite{meher2023arithmeticdensitycongruencestcore}).  Note that in their proofs, they also use Radu's lemma, but find that $7$-core partitions additionally satisfy congruences for $P_{\alpha,r}(3) = \{3,15,18\}$. This trend continues for all $t$-cores, where $t \in \{7,11,13,17,19,23\}$. It turns out, our crank generating functions only account for congruences given by $P_{\alpha,r}(\beta)$, where $\beta$ is a quadratic non-residue, mod $3$. \\

We conclude with two questions that are of great interest.
\begin{enumerate}
    \item Can a family of crank generating functions be given that explain all congruences for $p^{(t)}(n)$ simultaneously when $t \leq 23$?
    \item Are there other congruences that belong in this ``mod 3" family for $t>23$?
    \item Does the family mentioned in (1) or the crank generating functions defined in the paper extend to an infinite family?
\end{enumerate}

\section{Statement of Funding, Conflicts of Interest, and Data Availability}
This work received no funding, has no conflicts of interest, and there is no necessary data to be made available.

\bibliographystyle{alpha}
\bibliography{refs}

@misc{rolen2021cranks,
      title={Cranks for Ramanujan-type congruences of $k$-colored partitions}, 
      author={Larry Rolen and Zack Tripp and Ian Wagner},
      year={2021},
      eprint={2006.16195},
      archivePrefix={arXiv},
      primaryClass={math.NT}
}

@article {meher2023arithmeticdensitycongruencestcore,
    AUTHOR = {Jindal, Ankita and Meher, N. K.},
     TITLE = {Arithmetic density and congruences of {$t$}-core partitions},
   JOURNAL = {Results Math.},
  FJOURNAL = {Results in Mathematics},
    VOLUME = {79},
      YEAR = {2024},
    NUMBER = {1},
     PAGES = {Paper No. 4, 23},
      ISSN = {1422-6383,1420-9012},
   MRCLASS = {11P83 (05A17 11F11)},
  MRNUMBER = {4658656},
MRREVIEWER = {James\ A.\ Sellers},
       DOI = {10.1007/s00025-023-02032-z},
       URL = {https://doi.org/10.1007/s00025-023-02032-z},
}

@article {wilson2024cranks,
    AUTHOR = {Wilson, Samuel},
     TITLE = {A proposed crank for {$(k+j)$}-colored partitions, with {$j$}
              colors having distinct parts},
   JOURNAL = {Res. Number Theory},
  FJOURNAL = {Research in Number Theory},
    VOLUME = {11},
      YEAR = {2025},
    NUMBER = {2},
     PAGES = {Paper No. 46, 8},
      ISSN = {2522-0160,2363-9555},
   MRCLASS = {11P83 (05A15)},
  MRNUMBER = {4886091},
MRREVIEWER = {Jeremy\ Lovejoy},
       DOI = {10.1007/s40993-025-00625-x},
       URL = {https://doi.org/10.1007/s40993-025-00625-x},
}

@article{RaduSellers,
author = {RADU, SILVIU and SELLERS, JAMES A.},
title = {CONGRUENCE PROPERTIES MODULO 5 AND 7 FOR THE pod FUNCTION},
journal = {International Journal of Number Theory},
volume = {07},
number = {08},
pages = {2249-2259},
year = {2011},
doi = {10.1142/S1793042111005064},

URL = { 
    
        https://doi.org/10.1142/S1793042111005064
    
    

}}

@article{WANG_2017, title={ARITHMETIC PROPERTIES OF $(k,\ell )$ -REGULAR BIPARTITIONS}, volume={95}, DOI={10.1017/S0004972716000964}, number={3}, journal={Bulletin of the Australian Mathematical Society}, author={WANG, LIUQUAN}, year={2017}, pages={353–364}}

@book {MR2112196,
    AUTHOR = {Diamond, Fred and Shurman, Jerry},
     TITLE = {A first course in modular forms},
    SERIES = {Graduate Texts in Mathematics},
    VOLUME = {228},
 PUBLISHER = {Springer-Verlag, New York},
      YEAR = {2005},
     PAGES = {xvi+436},
      ISBN = {0-387-23229-X},
   MRCLASS = {11Fxx},
  MRNUMBER = {2112196},
MRREVIEWER = {Henri\ Darmon},
}

\end{document}